\documentclass[11pt]{amsart}

\usepackage[T1]{fontenc}
\usepackage{lmodern}
\usepackage{microtype}
\usepackage{mathtools,amssymb}
\usepackage{enumitem}
\usepackage[hidelinks]{hyperref}
\hypersetup{
  pdftitle={Dittert's conjecture in dimension 16 via a joint-deficit scaling lemma},
  pdfauthor={Boris Kafidov},
  pdfsubject={A proof of Dittert's conjecture in dimension 16},
  pdfkeywords={Dittert conjecture, permanent, doubly stochastic matrix, doubly superstochastic matrix}
}

\numberwithin{equation}{section}
\setlist[enumerate]{leftmargin=*,itemsep=0.35em,topsep=0.45em}

\DeclareMathOperator{\permanent}{per}
\newcommand{\R}{\mathbb{R}}
\newcommand{\one}{\mathbf{1}}

\newtheorem{theorem}{Theorem}[section]
\newtheorem{lemma}[theorem]{Lemma}
\newtheorem{proposition}[theorem]{Proposition}
\newtheorem{corollary}[theorem]{Corollary}
\theoremstyle{remark}
\newtheorem{remark}[theorem]{Remark}

\title[Dittert's conjecture in dimension 16]
{Dittert's conjecture in dimension $16$\\
via a joint-deficit scaling lemma}

\author{Boris Kafidov}
\thanks{The proof strategy, central lemma, and most of the initial exposition were produced by OpenAI's GPT-5.6 Sol through ChatGPT under the direction of the author. See the disclosure at the end of the paper.}
\date{21 July 2026}

\subjclass[2020]{15A15, 15B51}
\keywords{Dittert conjecture, permanent, doubly stochastic matrix, doubly superstochastic matrix}

\begin{document}

\begin{abstract}
Let $K_n$ be the set of nonnegative $n\times n$ matrices whose entries sum to $n$, and let
\[
 \Phi(A)=\prod_{i=1}^n r_i+\prod_{j=1}^n c_j-\permanent(A),
\]
where $r_i$ and $c_j$ are the row and column sums of $A$.
We prove Dittert's conjecture in dimension $16$:
\[
 \Phi(A)\le 2-\frac{16!}{16^{16}},
\]
with equality only at the matrix all of whose entries are $1/16$.
The key observation is that the row-product and column-product deficits of a near maximizer share one permanent-deficit budget. This yields a sharper scalar dilation to a doubly superstochastic matrix and excludes boundary maximizers in dimension $16$.
Combined with Pang's recent preprint for $n\ge 17$, the result gives the conjecture for every $n\ge16$.
\end{abstract}

\maketitle

\section{Introduction}

Let
\[
 K_n=\left\{A=(a_{ij})\in\R_{\ge0}^{n\times n}:\sum_{i,j=1}^n a_{ij}=n\right\}.
\]
For $A\in K_n$, write
\[
 r_i=\sum_{j=1}^n a_{ij},\qquad c_j=\sum_{i=1}^n a_{ij},
\]
and define the Dittert functional
\[
 \Phi(A)=\prod_{i=1}^n r_i+\prod_{j=1}^n c_j-\permanent(A).
\]
Let $J_n=\one\one^{\mathsf T}$ and $U_n=n^{-1}J_n$. Dittert's conjecture asserts that
\[
 \Phi(A)\le 2-\gamma_n,
 \qquad \gamma_n:=\frac{n!}{n^n},
 \tag{1.1}\label{eq:dittert}
\]
for every $A\in K_n$, with equality only at $U_n$.

The cases $n=2$ and $n=3$ are classical. Pang's 2026 preprint proves the conjecture for every $n\ge17$ and explicitly leaves $4\le n\le16$ open \cite{Pang2026}. The purpose of this note is to close the endpoint $n=16$.

\begin{theorem}\label{thm:main}
For every $A\in K_{16}$,
\[
 \Phi(A)\le 2-\frac{16!}{16^{16}},
\]
with equality if and only if $A=U_{16}$.
\end{theorem}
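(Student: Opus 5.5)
We argue by contradiction at a maximizer and reduce to the doubly superstochastic case, for which we rely on the known results.

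\emph{Step 1: reduction to a maximizer.} Since $K_{16}$ is compact and $\Phi$ is continuous, $\Phi$ attains its maximum at some $A\in K_{16}$. Because $\Phi(U_{16})=2-\gamma_{16}$, it suffices to show that every maximizer equals $U_{16}$. Suppose, for contradiction, that a maximizer $A$ satisfies $\Phi(A)\ge 2-\gamma_{16}$ and $A\neq U_{16}$.

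\emph{Step 2: interior maximizers.} If $A$ has all entries positive, we use the Lagrange conditions
\[
\partial\Phi/\partial a_{ij}=\lambda\quad\text{for all } i,j,
\]
that is,
\[
\textstyle\prod_{k\ne i}r_k+\prod_{l\ne j}c_l-\permanent A(i|j)=\lambda .
\]
Known structural results (Sinkhorn, Hwang) show that such a critical point has all row and column sums equal to $1$. So $A$ is doubly stochastic, and
\[
\Phi(A)=2-\permanent A\le 2-\gamma_{16}
\]
by van der Waerden's inequality, with equality only at $U_{16}$. This disposes of interior maximizers.

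\emph{Step 3: boundary maximizers via joint deficits.} This is the main step. Write
\[
P=\prod_i r_i,\qquad Q=\prod_j c_j .
\]
By AM--GM, $P,Q\le 1$. The hypothesis $\Phi(A)\ge 2-\gamma_{16}$ gives
\[
(1-P)+(1-Q)\le \gamma_{16}-\permanent A\le\gamma_{16}.
\]
So the row deficit and the column deficit together fit inside the single budget $\gamma_{16}-\permanent A$; this is the key observation. From $1-P$ small we get that every $r_i$ is close to $1$, via a quantitative AM--GM bound such as $\sum_i(r_i-1)^2\lesssim 2(1-P)$, and the same holds for the $c_j$. The point is that the two estimates share the budget $\gamma_{16}-\permanent A$ rather than each using $\gamma_{16}$.

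\emph{Step 4: dilation to a superstochastic matrix.} Set $s=1/\min(\min_i r_i,\min_j c_j)$. Then $sA$ is doubly superstochastic, and the superstochastic van der Waerden bound gives
\[
\permanent A\ge s^{-16}\gamma_{16}.
\]
Combining this with Step 3 gives
\[
(1-P)+(1-Q)\le\gamma_{16}(1-s^{-16}),
\]
while $s-1$ is itself bounded in terms of $(1-P)+(1-Q)$. This yields a self-improving inequality that forces $s=1$.

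\emph{Step 5: conclusion.} If $s=1$, then all $r_i,c_j\ge1$ with total $16$, so $A$ is doubly stochastic. Then van der Waerden together with its equality case forces $A=U_{16}$, contradicting the assumption $A\neq U_{16}$.

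The main obstacle is making Step 4's inequality close numerically at $n=16$. The ratio $\gamma_{16}\approx 1.1\times10^{-6}$ is tiny, but the bound $s-1\lesssim\sqrt{\text{deficit}}$ loses a square root. I would first try the sharp one-variable bound: if some $r_i=1-t$ and the other rows share the remaining mass equally, then
\[
P\le(1-t)\bigl(1+t/15\bigr)^{15}\approx 1-\tfrac{8}{15}t^2 .
\]
I would then check that
\[
\tfrac{8}{15}t^2\le\gamma_{16}\bigl(1-(1-t)^{16}\bigr)\approx 16\gamma_{16}t
\]
forces $t\le 30\gamma_{16}$. Finally I would verify explicitly that this residual range is excluded, possibly by using the boundary zero entry to push $\permanent A$ above $s^{-16}\gamma_{16}$ by a margin.
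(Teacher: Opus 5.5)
Your Step 3 is exactly the paper's key observation: the joint budget $(1-P)+(1-Q)\le\gamma_{16}-\permanent A$. Write $\rho=1-P$, $\sigma=1-Q$, $\delta=\gamma_{16}-\permanent A$. Steps 4--5, however, break down in two places.

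\emph{The dilation in Step 4 is invalid.} Making every row and column sum at least $1$ does not make a matrix doubly superstochastic. Already in $K_3$ you can take $1<s\le 4/3$ and set $r_1=r_2=c_1=c_2=1/s$ and $a_{33}=0$. Let $a_{11},a_{12},a_{21},a_{22}$ all equal $1/s-3/4$; the remaining entries are then forced and nonnegative. All margins of $A$ are within $O(s-1)$ of $1$, and all margins of $sA$ are $\ge 1$. Yet the entries of $sA$ in rows $\{1,2\}$ and columns $\{1,2\}$ sum to $4-3s<1=|I|+|J|-3$, so no doubly stochastic $B\le sA$ exists. The correct test is the Hall-type condition that every $I\times J$ block has sum at least $|I|+|J|-n$. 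By inclusion--exclusion this needs two-sided control of the subset sums $\sum_{i\in I}r_i$ and $\sum_{j\in J}c_j$ for all $I,J$, not just control of the smallest margin. Your one-row bound only covers $|I|=1$. The paper instead uses a two-block AM--GM plus Pinsker to get $\bigl|\sum_{i\in I}r_i-|I|\bigr|\le\sqrt{n\rho/(2(1-\delta))}$ uniformly in $I$. Combined with $\sqrt\rho+\sqrt\sigma\le\sqrt{2\delta}$ and $|I|+|J|-n\ge 1$, this shows $(1-t)^{-1}A$ is doubly superstochastic for $t=\sqrt{n\delta/(1-\delta)}$.

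\emph{The closing step cannot force $s=1$, even with a correct dilation.} From $D\le\gamma_{16}(1-s^{-16})\approx 16\gamma_{16}(s-1)$ and $s-1\le C\sqrt D$ you only get $D\le(16C\gamma_{16})^2$. That is exactly the residual range your last paragraph reaches. It cannot be excluded using only $\permanent B\ge\gamma_{16}$: the available slack is $\delta$, while the dilation costs about $16\gamma_{16}t$ with $t\asymp\sqrt\delta$, and this cost dominates as $\delta\to 0$.

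The missing ingredient is the Knopp--Sinkhorn boundary estimate. A doubly stochastic $B\le(1-t)^{-1}A$ vanishes wherever $A$ does, so $\permanent B\ge m_{16}=14!\,(14/225)^{14}$. The gap $m_{16}-\gamma_{16}\approx 2.47\times 10^{-9}$ is fixed and positive. Then $\gamma_{16}-\delta=\permanent A\ge m_{16}(1-t)^{16}$ is contradicted. Bernoulli's inequality, followed by minimizing the resulting quadratic in $\sqrt\delta$, gives $m_{16}(1-t)^{16}-(\gamma_{16}-\delta)\ge m_{16}-\gamma_{16}-1024\,m_{16}^2/(1-\gamma_{16})>0$.

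This margin is thin, since $1024\,m_{16}^2\approx 1.32\times 10^{-9}$. With separate budgets $\rho\le\delta$ and $\sigma\le\delta$ the constant doubles to $2048$ and the estimate no longer closes. So both the joint budget and the subset bound that is uniform in $I$ are genuinely needed, alongside the zero-entry permanent bound.
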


The argument follows the boundary-exclusion framework of Cheon--Wanless and Pang, but retains a constraint discarded in the earlier dilation estimates: the deficits of the two products in $\Phi$ have a common total budget. This improves the dilation enough to reach dimension $16$.

At the time of writing, no earlier complete proof of the $n=16$ case was located, and \cite[Remark~2]{Pang2026} still records it as open. The priority claim here is therefore made only to the best of the author's knowledge.

\section{Preliminaries}

Let $\Omega_n$ denote the set of doubly stochastic $n\times n$ matrices. A nonnegative matrix $S$ is \emph{doubly superstochastic} if there exists $B\in\Omega_n$ such that $B\le S$ entrywise.
For index sets $I,J\subseteq\{1,\dots,n\}$, let $A[I,J]$ be the corresponding submatrix and let $s(A[I,J])$ denote the sum of its entries.

We use the following standard facts.

\begin{proposition}\label{prop:ingredients}
The following statements hold.
\begin{enumerate}[label=\textup{(\roman*)}]
 \item If $B\in\Omega_n$, then
 \[
   \permanent(B)\ge \gamma_n,
 \]
 with equality if and only if $B=U_n$ \cite{Egorychev1981,Falikman1981}.

 \item Every entrywise positive global maximizer of $\Phi$ on $K_n$ is $U_n$ \cite{Hwang1986}.

 \item A nonnegative $n\times n$ matrix $S$ is doubly superstochastic if and only if
 \[
   s(S[I,J])\ge |I|+|J|-n
   \tag{2.1}\label{eq:hall}
 \]
 for all $I,J\subseteq\{1,\dots,n\}$ \cite[Lemma~2.2]{CheonWanless2012}.

 \item If $n>3$, $B\in\Omega_n$, and $B$ has a zero entry, then, by the Knopp--Sinkhorn boundary estimate \cite{KnoppSinkhorn1982},
 \[
   \permanent(B)\ge m_n,
   \qquad
   m_n:=(n-2)!\left(\frac{n-2}{(n-1)^2}\right)^{n-2}.
   \tag{2.2}\label{eq:boundary}
 \]
\end{enumerate}
\end{proposition}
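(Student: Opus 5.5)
Each item is a known result, so the plan is to verify the precise form used here against the cited sources and to re-prove item (iii), which is the one we will use as a working tool.

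For (i), I will cite the van der Waerden permanent theorem as proved by Egorychev and Falikman. It gives $\permanent(B)\ge n!/n^n=\gamma_n$ on $\Omega_n$, with equality exactly at $U_n$.

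For (ii), I will quote Hwang's theorem: any maximizer of $\Phi$ on $K_n$ with all entries positive must equal $U_n$. The point to check is that Hwang's statement concerns global maximizers in the interior of $K_n$, which is exactly the form used here. The later argument therefore only has to rule out maximizers with a zero entry.

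For (iii), I will give the short max-flow/min-cut proof rather than rely on the citation alone.

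\emph{Necessity.} Suppose $B\in\Omega_n$ and $B\le S$. For any $I,J$,
\[
 s(S[I,J])\ge s(B[I,J])=|I|-s(B[I,J^c])\ge |I|-s(B[\{1,\dots,n\},J^c])=|I|-(n-|J|),
\]
which is \eqref{eq:hall}.

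\emph{Sufficiency.} Build a network with a source joined to each row vertex $i$ (capacity $1$), each row $i$ joined to each column $j$ (capacity $s_{ij}$), and each column joined to a sink (capacity $1$). A real-valued flow of value $n$ saturates all source and sink arcs. Its middle arc values then form a doubly stochastic $B\le S$.

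Consider a cut whose source side contains the rows in $I$ and the columns in $J^c$. Its capacity is
\[
 (n-|I|)+s(S[I,J])+(n-|J|).
\]
Hence every cut has capacity at least $n$ precisely when \eqref{eq:hall} holds. By max-flow/min-cut, the maximum flow is then $n$, which gives $B$.

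For (iv), I will cite the Knopp--Sinkhorn bound on permanents of doubly stochastic matrices containing a zero. The only work is to match notation and confirm that the stated $m_n$ is their constant for $n>3$.

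The main obstacle is (iv): checking that the extremal value in Knopp--Sinkhorn is exactly $m_n=(n-2)!\bigl((n-2)/(n-1)^2\bigr)^{n-2}$ with the normalization used here. I would confirm it by evaluating the permanent of their extremal matrix. That matrix has a zero in position $(1,1)$, entries $1/(n-1)$ in the rest of the first row and column, and entries $(n-2)/(n-1)^2$ in the remaining $(n-1)\times(n-1)$ block, which makes it doubly stochastic. Expanding along the first row and column should produce the stated constant, possibly with lower-order terms. Those terms would need to be reconciled with the citation before the bound is used as stated.
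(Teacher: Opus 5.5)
Your proposal is correct. The paper gives no proof of this proposition: all four items, including (iii), are imported by citation (Egorychev--Falikman, Hwang, Cheon--Wanless Lemma~2.2, Knopp--Sinkhorn).

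The one genuine difference is (iii), which you prove directly by max-flow/min-cut. That argument is sound:
\begin{itemize}
\item Every source--sink cut has source side consisting of the source, the rows in some $I$, and the columns in some $J^c$. Its capacity is the one you computed.
\item The cut consisting of the source alone caps the flow value at $n$.
\item A flow of value $n$ saturates every unit arc. Its middle arc values therefore form some $B\in\Omega_n$ with $B\le S$.
\end{itemize}
This makes the argument self-contained for exactly the ingredient that Lemma~\ref{lem:scaling} uses as a working tool. The price is relying on the real-capacity max-flow/min-cut theorem instead of the Cheon--Wanless citation.

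On (iv), the reconciliation you anticipate is unnecessary. Put $c=(n-2)/(n-1)^2$. Every permutation contributing a nonzero term to the permanent of the Knopp--Sinkhorn matrix has three features:
\begin{itemize}
\item it sends row $1$ to some column $j\ne 1$;
\item it sends some row $i\ne 1$ to column $1$;
\item it maps the remaining $n-2$ rows bijectively onto the remaining $n-2$ columns inside the block.
\end{itemize}
There are $(n-1)^2(n-2)!$ such permutations, each contributing $(n-1)^{-2}c^{\,n-2}$. So the permanent is exactly $(n-2)!\,c^{\,n-2}=m_n$, with no lower-order terms.

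This computation only shows that $m_n$ is attained. The inequality $\permanent(B)\ge m_n$ for every $B\in\Omega_n$ with a zero entry still rests on the Knopp--Sinkhorn theorem itself, just as in the paper.
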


\section{The joint-deficit scaling lemma}

\begin{lemma}[Joint-deficit scaling]\label{lem:scaling}
Let $A\in K_n$ satisfy
\[
 \Phi(A)\ge 2-\gamma_n,
 \qquad
 \permanent(A)=\gamma_n-\delta,
 \qquad 0\le\delta<1.
\]
Set
\[
 t=\sqrt{\frac{n\delta}{1-\delta}}.
\]
If $t<1$, then $(1-t)^{-1}A$ is doubly superstochastic.
\end{lemma}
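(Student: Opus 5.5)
The plan is to reduce the superstochasticity of $(1-t)^{-1}A$ to the Hall-type criterion \eqref{eq:hall}. That is, we show
\[
 s(A[I,J])\ge (1-t)\bigl(|I|+|J|-n\bigr)
\]
for all $I,J$; only the case $|I|+|J|>n$ needs work, since otherwise the right-hand side is nonpositive. First extract the information hidden in the hypothesis. Write $P_r=\prod r_i$ and $P_c=\prod c_j$. The hypothesis $\Phi(A)\ge 2-\gamma_n$ together with $\permanent(A)=\gamma_n-\delta$ gives
\[
 P_r+P_c\ge 2-\delta .
\]
Since $\sum r_i=\sum c_j=n$, AM--GM gives $P_r,P_c\le 1$. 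Writing $P_r=1-\varepsilon_r$ and $P_c=1-\varepsilon_c$, we get the \emph{joint} budget $\varepsilon_r+\varepsilon_c\le\delta$ with $\varepsilon_r,\varepsilon_c\ge0$. Keeping this joint constraint, rather than bounding each deficit by $\delta$ separately, is the point of the lemma.

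Next, convert a product deficit into a bound on partial sums. Let $|I|=k$, $|J|=l$, $m=k+l-n\ge1$, and write $\sum_{i\in I} r_i=k-u$ and $\sum_{j\in J}c_j=l-v$. By concavity of $\log$, with the sums over $I$ and $I^c$ fixed, the product is maximized when the rows are equal within each block. Hence
\[
 1-\varepsilon_r\le \Bigl(1-\tfrac uk\Bigr)^{k}\Bigl(1+\tfrac u{n-k}\Bigr)^{n-k}\le \exp\Bigl(-\tfrac{n u^2}{2k(n-k)}\Bigr),
\]
where the last step uses $\log(1-x)+x\le -x^2/2$ and an elementary estimate of the second factor. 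Taking logarithms and using $-\log(1-\varepsilon)\le \varepsilon/(1-\varepsilon)\le \varepsilon_r/(1-\delta)$ yields
\[
 u^2\le \frac{2k(n-k)}{n}\cdot\frac{\varepsilon_r}{1-\delta},
\]
and similarly for $v$ with $l$ and $\varepsilon_c$. Getting this quadratic bound with the right constants, possibly with a sharper handling of the two-block extremal problem, is the step I expect to be the main obstacle. The exact normalization of $t=\sqrt{n\delta/(1-\delta)}$ has to come out of it. If the crude exponential bound loses a constant, I would instead optimize the two-block function of $u$ directly.

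Now bound the submatrix sum. By inclusion--exclusion,
\[
 s(A[I,J])=\sum_{i\in I}r_i+\sum_{j\in J}c_j-n+s(A[I^c,J^c])\ge m-(u+v).
\]
So it suffices to show $u+v\le t m$. Since $n-k=l-m$ and $n-l=k-m$, we have $k(n-k)\le n\cdot\max(\cdot)$ type bounds; more concretely $k(n-k)/n\le (n-k)\le n-k$. Combining the two estimates by Cauchy--Schwarz,
\[
 u+v\le \sqrt{\tfrac{2}{1-\delta}}\,\sqrt{\tfrac{k(n-k)}{n}+\tfrac{l(n-l)}{n}}\;\sqrt{\varepsilon_r+\varepsilon_c}.
\]
Using $\varepsilon_r+\varepsilon_c\le\delta$ and checking that $\frac{2}{n}\bigl(k(n-k)+l(n-l)\bigr)\le n m^2$ for $m\ge1$ then gives $u+v\le tm$. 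The worst case is $m=1$, where $k(n-k)+l(n-l)=2k(n-k)\le n^2/2$; I would verify the general $m$ case as a short case check. Hence $s(A[I,J])\ge (1-t)m$, and Proposition~\ref{prop:ingredients}(iii) applied to $(1-t)^{-1}A$ (which is nonnegative since $t<1$) finishes the argument.
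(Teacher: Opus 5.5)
Your architecture (joint budget $\varepsilon_r+\varepsilon_c\le\delta$, two-block AM--GM, inclusion--exclusion, Cauchy--Schwarz, Hall criterion) is the paper's. However, the step you flagged as the main obstacle fails, and not for the reason you anticipated. The inequality
\[
 \Bigl(1-\tfrac uk\Bigr)^{k}\Bigl(1+\tfrac u{n-k}\Bigr)^{n-k}\le \exp\Bigl(-\tfrac{n u^2}{2k(n-k)}\Bigr)
\]
is false in general. With $p=k/n$ and $q=(k-u)/n$ it says $D(p\Vert q)\ge (p-q)^2/(2p(1-p))$. That is, it uses the curvature of $D(p\Vert\cdot)$ at $q=p$ as a global lower bound. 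But $D(p\Vert q)=\int_p^q \frac{s-p}{s(1-s)}\,ds$. When $q\neq p$ lies between $p$ and $1/2$, one has $s(1-s)>p(1-p)$ on the range of integration, so the inequality reverses.

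Concretely, take $n=16$, $k=15$, $u=1$. The left side is $2(14/15)^{15}\approx e^{-0.342}$, whereas the right side is $e^{-8/15}\approx e^{-0.533}$. So your bound $u^2\le \frac{2k(n-k)}{n}\cdot\frac{\varepsilon_r}{1-\delta}$ has no valid derivation for deficit blocks with $k>n/2$. Your fallback of handling a ``lost constant'' misdiagnoses the problem: the claimed estimate is too strong, not too weak.

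The repair is to drop the $k$-dependence. Bounding $s(1-s)\le 1/4$ in the integral gives the binary Pinsker inequality $D(p\Vert q)\ge 2(p-q)^2$, hence
\[
 u^2\le \frac n2\log\frac1{1-\varepsilon_r}\le \frac{n\,\varepsilon_r}{2(1-\delta)}
\]
uniformly in $k$, and likewise for $v$. This is your bound with $k(n-k)/n$ replaced by its maximum $n/4$, which is the only form your final step actually used. Then
\[
 u+v\le \sqrt{\frac{n}{2(1-\delta)}}\bigl(\sqrt{\varepsilon_r}+\sqrt{\varepsilon_c}\bigr)\le\sqrt{\frac{n\delta}{1-\delta}}=t\le tm,
\]
using $\sqrt{\varepsilon_r}+\sqrt{\varepsilon_c}\le\sqrt{2(\varepsilon_r+\varepsilon_c)}\le\sqrt{2\delta}$. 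No case analysis in $m$ is needed. With this substitution your proof coincides with the paper's.
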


\begin{proof}
Put
\[
 R=\prod_{i=1}^n r_i=1-\rho,
 \qquad
 C=\prod_{j=1}^n c_j=1-\sigma.
\]
Since the row sums and column sums each have arithmetic mean $1$, AM--GM gives $R,C\le1$, hence $\rho,\sigma\ge0$. The hypothesis on $\Phi$ yields
\[
 (1-\rho)+(1-\sigma)-(\gamma_n-\delta)\ge2-\gamma_n,
\]
so
\[
 \rho+\sigma\le\delta.
 \tag{3.1}\label{eq:jointbudget}
\]
In particular $R,C>0$ and $0\le\rho,\sigma\le\delta<1$.

Fix a set $I$ of $k$ rows, where $1\le k\le n-1$, and write
\[
 \sum_{i\in I}r_i=k+\varepsilon.
\]
Applying AM--GM separately to the rows in $I$ and in its complement gives
\[
 R\le
 \left(1+\frac{\varepsilon}{k}\right)^k
 \left(1-\frac{\varepsilon}{n-k}\right)^{n-k}.
 \tag{3.2}\label{eq:twoblock}
\]
Let $p=k/n$ and $q=(k+\varepsilon)/n$. The logarithm of the right-hand side of \eqref{eq:twoblock} is $-nD(p\Vert q)$, where
\[
 D(p\Vert q)=p\log\frac pq+(1-p)\log\frac{1-p}{1-q}.
\]
The binary Pinsker inequality
\[
 D(p\Vert q)\ge2(p-q)^2
 \tag{3.3}\label{eq:pinsker}
\]
is elementary here: for fixed $p$, the derivative with respect to $q$ is $(q-p)/(q(1-q))$, and $q(1-q)\le1/4$; integration from $p$ to $q$ proves \eqref{eq:pinsker}. Consequently,
\[
 \log R\le-\frac{2\varepsilon^2}{n}.
\]
Since $R=1-\rho$,
\[
 |\varepsilon|
 \le \sqrt{-\frac n2\log(1-\rho)}
 \le \sqrt{\frac{n\rho}{2(1-\delta)}}.
 \tag{3.4}\label{eq:rowsubset}
\]
Here we used $-\log(1-x)\le x/(1-x)$ and $\rho\le\delta$. The same argument for columns gives, for every set $J$ of columns,
\[
 \left|\sum_{j\in J}c_j-|J|\right|
 \le \sqrt{\frac{n\sigma}{2(1-\delta)}}.
 \tag{3.5}\label{eq:columnsubset}
\]

Now take row and column sets $I,J$ with
\[
 p:=|I|+|J|-n>0,
\]
and put $a=|I^c|$ and $b=|J^c|$, so that $p=n-a-b$. By inclusion--exclusion and nonnegativity,
\begin{align*}
 s(A[I,J])
 &=n-\sum_{i\in I^c}r_i-\sum_{j\in J^c}c_j+s(A[I^c,J^c])\\
 &\ge p-
 \sqrt{\frac{n\rho}{2(1-\delta)}}-
 \sqrt{\frac{n\sigma}{2(1-\delta)}}\\
 &\ge p-\sqrt{\frac{n\delta}{1-\delta}}
 =p-t.
\end{align*}
The second inequality follows from
\[
 \sqrt\rho+\sqrt\sigma
 \le\sqrt{2(\rho+\sigma)}
 \le\sqrt{2\delta}
\]
and \eqref{eq:jointbudget}. Since $p$ is a positive integer, $p\ge1$, and therefore
\[
 p-t\ge(1-t)p.
\]
Thus
\[
 s\bigl((1-t)^{-1}A[I,J]\bigr)\ge p
\]
for every pair with $p>0$; pairs with $p\le0$ satisfy \eqref{eq:hall} automatically. Proposition~\ref{prop:ingredients}\textup{(iii)} now shows that $(1-t)^{-1}A$ is doubly superstochastic.
\end{proof}

\section{Boundary exclusion in dimension 16}

\begin{proof}[Proof of Theorem~\ref{thm:main}]
Because $K_{16}$ is compact and $\Phi$ is continuous, $\Phi$ has a global maximizer $A\in K_{16}$. Since $\Phi(U_{16})=2-\gamma$, where
\[
 \gamma:=\frac{16!}{16^{16}},
\]
we have $\Phi(A)\ge2-\gamma$.
If $A$ is positive, Proposition~\ref{prop:ingredients}\textup{(ii)} gives $A=U_{16}$. It remains to exclude a maximizer with a zero entry.

Write
\[
 \permanent(A)=\gamma-\delta.
\]
AM--GM gives $\Phi(A)\le2-\permanent(A)$, so maximality implies $\permanent(A)\le\gamma$; hence $\delta\ge0$. Since the permanent is nonnegative, $\delta\le\gamma$.
If $\delta=0$, then the two AM--GM product bounds must both be equalities, so $A\in\Omega_{16}$. Proposition~\ref{prop:ingredients}\textup{(i)} then forces $A=U_{16}$, contradicting the assumed zero. Thus
\[
 0<\delta\le\gamma.
 \tag{4.1}\label{eq:deltarange}
\]

Set
\[
 t=\sqrt{\frac{16\delta}{1-\delta}}.
\]
Since $\gamma=\prod_{k=1}^{16}(k/16)\le2^{-8}<1/17$, equation~\eqref{eq:deltarange} gives $t^2\le16\gamma/(1-\gamma)<1$. Lemma~\ref{lem:scaling} therefore implies that
\[
 S=(1-t)^{-1}A
\]
is doubly superstochastic. Hence some $B\in\Omega_{16}$ satisfies $B\le S$. Because $S$ has a zero entry, $B$ has a zero in the same position. With
\[
 m:=14!\left(\frac{14}{15^2}\right)^{14},
\]
Proposition~\ref{prop:ingredients}\textup{(iv)} and monotonicity of the permanent on nonnegative matrices give
\[
 \permanent(S)\ge\permanent(B)\ge m.
\]
Consequently,
\[
 \gamma-\delta
 =\permanent(A)
 =(1-t)^{16}\permanent(S)
 \ge m(1-t)^{16}.
 \tag{4.2}\label{eq:lowerper}
\]

We show that the reverse strict inequality holds. Bernoulli's inequality, \eqref{eq:deltarange}, and $x=\sqrt\delta$ give
\begin{align*}
 m(1-t)^{16}-(\gamma-\delta)
 &\ge m-\gamma+\delta-16m\sqrt{\frac{16\delta}{1-\delta}}\\
 &\ge m-\gamma+x^2-\frac{64m}{\sqrt{1-\gamma}}x\\
 &\ge m-\gamma-\frac{1024m^2}{1-\gamma}.
 \tag{4.3}\label{eq:quadratic}
\end{align*}
The last quantity is positive by exact rational comparison. Indeed,
\[
 \frac{1\,134\,226}{10^{12}}<\gamma<
 \frac{1\,134\,227}{10^{12}},
 \qquad
 \frac{1\,136\,699}{10^{12}}<m<
 \frac{1\,136\,700}{10^{12}}.
 \tag{4.4}\label{eq:rationalbounds}
\]
Therefore
\[
 m-\gamma>\frac{2472}{10^{12}},
\]
whereas
\[
 \frac{1024m^2}{1-\gamma}
 <\frac{1024(1\,136\,700)^2}
 {10^{12}(10^{12}-1\,134\,227)}
 <\frac{1324}{10^{12}}.
\]
Thus the right-hand side of \eqref{eq:quadratic} is greater than
\[
 \frac{1148}{10^{12}}>0.
\]
This contradicts \eqref{eq:lowerper}. Hence a global maximizer has no zero entry, and Proposition~\ref{prop:ingredients}\textup{(ii)} identifies it uniquely as $U_{16}$.
\end{proof}

\begin{corollary}\label{cor:range}
Together with Pang's result \cite{Pang2026}, Dittert's conjecture holds for every $n\ge16$. The dimensions $4\le n\le15$ are not settled by the present argument.
\end{corollary}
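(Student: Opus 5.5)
The corollary is a bookkeeping consequence rather than a new estimate. I will split it into its two assertions.

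For the positive assertion, I will fix $n\ge16$ and split into the cases $n=16$ and $n\ge17$. For $n=16$, Theorem~\ref{thm:main} gives exactly \eqref{eq:dittert}, with equality only at $U_{16}$. For $n\ge17$, I will invoke Pang's theorem \cite{Pang2026}, which asserts \eqref{eq:dittert} together with the uniqueness of the equality case. The only thing I need to check is that Pang's statement covers the full conjecture, inequality and equality characterization, for every $n\ge17$ with no additional hypotheses, so that the two ranges join without a gap at $n=16$. Since \cite[Remark~2]{Pang2026} lists exactly $4\le n\le16$ as open, the union of the two ranges is $\{n\ge16\}$.

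For the negative assertion (that $4\le n\le15$ is not settled), I will explain why the method of Section~4 fails below $16$. The argument needs two things:
\begin{itemize}
\item $t<1$, so that Lemma~\ref{lem:scaling} applies;
\item the boundary bound $m_n$ of \eqref{eq:boundary} must beat $\gamma_n$ with enough margin to absorb the loss $m_n\bigl(1-(1-t)^n\bigr)$.
\end{itemize}
The argument of \eqref{eq:quadratic} generalizes to the sufficient condition
\[
 m_n-\gamma_n-\frac{n^3 m_n^2}{1-\gamma_n}>0 .
\]
For $n=16$ the factor $n^3=4096$ recovers $1024\cdot 4$ from \eqref{eq:quadratic}.

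The step I expect to be the main obstacle is making the remark about $n\le15$ precise. The plan is to evaluate $m_n$ and $\gamma_n$ for $n=15$, where I expect $m_{15}-\gamma_{15}$ to be too small relative to $n^3m_{15}^2$, or possibly already $m_{15}\le\gamma_{15}$, so that the contradiction with \eqref{eq:lowerper} is unavailable. Since the corollary only claims that the present argument does not settle these cases, exhibiting that the displayed sufficient condition fails, or simply noting that it is not verified, suffices. No claim about the truth of the conjecture for $n\le15$ is made.
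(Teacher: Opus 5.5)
Your positive assertion is handled exactly as the paper intends. The corollary carries no separate proof: it is Theorem~\ref{thm:main} at $n=16$ joined with Pang's theorem (inequality and equality case) for $n\ge17$. The second sentence is a scope disclaimer that requires no argument.

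The optional justification you sketch for that disclaimer, however, contains a concrete error. Running \eqref{eq:quadratic} for general $n$, with $x=\sqrt\delta$, gives $m_n(1-t)^n-(\gamma_n-\delta)\ge m_n-\gamma_n+x^2-bx$ with $b=n^{3/2}m_n/\sqrt{1-\gamma_n}$. Since $\min_x(x^2-bx)=-b^2/4$, the sufficient condition is
\[
 m_n-\gamma_n-\frac{n^3m_n^2}{4(1-\gamma_n)}>0 .
\]
At $n=16$ the constant is $4096/4=1024$, which is what appears in \eqref{eq:quadratic}. Your version without the factor $4$ does not recover \eqref{eq:quadratic}. In fact it fails at $n=16$ itself, since $4096m^2\approx5.29\cdot10^{-9}$ exceeds $m-\gamma\approx2.47\cdot10^{-9}$, so it could not even prove Theorem~\ref{thm:main}.

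With the correct constant your expectation about $n=15$ is right, but only narrowly. The values $m_{15}\approx2.99375\cdot10^{-6}$ and $\gamma_{15}\approx2.98628\cdot10^{-6}$ give $m_{15}-\gamma_{15}\approx7.46\cdot10^{-9}$, while $3375\,m_{15}^2/4\approx7.56\cdot10^{-9}$.

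The alternative $m_{15}\le\gamma_{15}$ that you mention cannot occur. The number $m_n$ is the permanent of an explicit doubly stochastic matrix with a zero entry: zero at position $(1,1)$, entries $1/(n-1)$ elsewhere in the first row and column, and $(n-2)/(n-1)^2$ in the remaining block. That matrix is not $U_n$, so Proposition~\ref{prop:ingredients}\textup{(i)} gives $m_n>\gamma_n$ for every $n>3$.

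None of this affects the corollary, which your first part already establishes.
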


\begin{remark}
The gain over the dilation used in \cite{Pang2026} is precisely \eqref{eq:jointbudget}. Bounding the row and column errors separately loses the fact that their product deficits must sum to at most the single permanent deficit $\delta$. Retaining that joint constraint changes the dilation to
\[
 t=\sqrt{\frac{n\delta}{1-\delta}},
\]
which is sufficient at $n=16$.
\end{remark}

\section*{Disclosure of AI assistance}

The proof strategy, the joint-deficit scaling lemma, and most of the original proof text were produced by OpenAI's GPT-5.6 Sol through ChatGPT in response to prompts from Boris Kafidov. ChatGPT was also used to revise the exposition and prepare the LaTeX manuscript. Boris Kafidov selected the problem, directed the interactions and revisions, and is the sole named author; he assumes responsibility for the final manuscript and for any submission of it. The AI system is acknowledged as a reasoning and writing tool, not as an author. This disclosure is not a substitute for independent expert mathematical review.


\begin{thebibliography}{99}

\bibitem{CheonWanless2012}
G.-S. Cheon and I. M. Wanless,
\emph{Some results towards the Dittert conjecture on permanents},
Linear Algebra Appl. \textbf{436} (2012), 791--801.
\href{https://doi.org/10.1016/j.laa.2010.08.041}{doi:10.1016/j.laa.2010.08.041}.

\bibitem{Egorychev1981}
G. P. Egorychev,
\emph{The solution of van der Waerden's problem for permanents},
Soviet Math. Dokl. \textbf{23} (1981), 619--622.

\bibitem{Falikman1981}
D. I. Falikman,
\emph{Proof of the van der Waerden conjecture regarding the permanent of a doubly stochastic matrix},
Math. Notes Acad. Sci. USSR \textbf{29} (1981), 475--479.

\bibitem{Hwang1986}
S.-G. Hwang,
\emph{A note on a conjecture on permanents},
Linear Algebra Appl. \textbf{76} (1986), 31--44.
\href{https://doi.org/10.1016/0024-3795(86)90212-0}{doi:10.1016/0024-3795(86)90212-0}.

\bibitem{KnoppSinkhorn1982}
P. Knopp and R. Sinkhorn,
\emph{Minimum permanents of doubly stochastic matrices with at least one zero entry},
Linear Multilinear Algebra \textbf{11} (1982), 351--355.
\href{https://doi.org/10.1080/03081088208817459}{doi:10.1080/03081088208817459}.

\bibitem{Pang2026}
Z. Pang,
\emph{Proof of Dittert's conjecture for dimensions $n\ge17$},
arXiv:2606.01531v1 [math.RA], 1 June 2026.
\href{https://doi.org/10.48550/arXiv.2606.01531}{doi:10.48550/arXiv.2606.01531}.

\end{thebibliography}
\end{document}